\pgfplotsset{compat=1.15}
\theoremstyle{plain}
\newtheorem{thm}{Theorem}
\newtheorem{prop}[thm]{Proposition}
\newtheorem{cor}[thm]{Corollary}
\theoremstyle{definition}
\newtheorem{defi}[thm]{Definition}
\theoremstyle{remark}
\date{}
\begin{document}

\title{Degrees in random uniform minimal factorizations}
\author{Etienne Bellin \footnote{etienne.bellin@polytechnique.edu}
\\
{\normalsize CMAP - Ecole Polytechnique}

}

\maketitle

\begin{abstract}
    We are interested in random uniform minimal factorizations of the $n$-cycle which are factorizations of $(1~2\dots n)$ into a product of $n-1$ transpositions. Our main result is an explicit formula for the joint probability that 1 and 2 appear a given number of times in a uniform minimal factorization. For this purpose, we combine bijections with Cayley trees together with explicit computations of multivariate generating functions.
\end{abstract}

\section{Introduction}

Consider the cycle permutation $(1~2\dots n)$ for $n\geq 2$. A \textit{minimal factorization} is a $(n-1)$-tuple of transpositions $(\tau_1,\dots,\tau_{n-1})$ such that $\tau_{n-1} \circ \dots \circ \tau_1 = (1~2\dots n)$. We denote by $\mathfrak{M}_n$ the set of all minimal factorizations of the cycle $(1~2\dots n)$. Dénes \cite{denes} first showed that $\mathfrak{M}_n$ has cardinality $n^{n-2}$ and several bijective proofs followed afterwards (see \cite{moszk}, \cite{pepper}, \cite{goulden} and \cite{biane2005}). Minimal factorizations are linked to other combinatorial objects such as non-crossing partitions \cite{biane1997} and parking functions \cite{biane2002} and more general factorizations have deep connections with enumerative geometry (see e.g \cite{Alexandrov}).

Let $(\tau_1^{(n)},\dots,\tau_{n-1}^{(n)})$ be a random minimal factorization chosen uniformly at random in $\mathfrak{M}_n$. The study of the behaviour of such a randomly picked minimal factorization is recent (see \cite{kor2018}, \cite{kor2019} and \cite{thevenin}) and has a rich probabilistic structure: for instance, it is shown in \cite{thevenin} that such minimal factorizations have connections with Aldous-Pitman fragmentation of the Brownian continuum random tree. Here we are interested in the law of the number of times 1 and 2 appear in $(\tau_1^{(n)},\dots,\tau_{n-1}^{(n)})$. In \cite[Corollary 1.2 (iv)]{kor2019} it was obtained that:
$$
    \mathbb{P} \left( \mathbb{T}_1^{(n)}=i, \mathbb{T}_2^{(n)}=j \right) \xrightarrow[n \rightarrow \infty]{} e^{-2} \left[  \frac{i+j-2}{(i+j-1)!} +\frac{i+j-1}{i!j!} -\frac{i+j-1}{(i+j)!} \right].
$$
where $\mathbb{T}_k^{(n)} = \#\{ 1 \leq \ell \leq n-1 ~: \tau_\ell^{(n)}(k) \neq k \}$ is the number of time $k$ appears in a transposition. We refine this result by finding explicitly the joint distribution for fixed $n$:

\begin{thm}
For $i,j \geq 1$ and $n \geq i+j$:
\begin{equation}
    \mathbb{P} \left( \mathbb{T}_1^{(n)}=i, \mathbb{T}_2^{(n)}=j \right) = \frac{n!(n-1)^{n-i-j-1}}{(n-i-j)!(n+1)^{n-1}} \left[  \frac{(i+j-2)(n-1)}{(i+j-1)!(n-i-j+1)} +\frac{i+j-1}{i!j!} -\frac{i+j-1}{(i+j)!} \right].
    \label{main formula}
\end{equation}
\label{main thm}
\end{thm}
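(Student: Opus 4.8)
The plan is to transport the question to random labelled trees and then read the answer off a multivariate generating function.

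\emph{Step 1: a tree model for the pair $(\mathbb{T}_1,\mathbb{T}_2)$.} The underlying graph of any minimal factorization is a spanning tree $T$ of the complete graph on $\{1,\dots,n\}$, and $\mathbb{T}_k$ is exactly the degree $\deg_T(k)$. The map ``factorization $\mapsto$ underlying tree'' is however not a bijection, so I would use one of the classical correspondences with Cayley trees (in the spirit of \cite{moszk},\cite{goulden},\cite{biane2005}; alternatively the bijection with parking functions of length $n-1$) to keep track of the weight $N(T)$, the number of minimal factorizations whose underlying tree is $T$ --- equivalently, the number of orderings of the edges of $T$ whose product equals $(1\,2\cdots n)$. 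Since $|\mathfrak{M}_n|=n^{n-2}$,
$$
\mathbb{P}\!\left(\mathbb{T}_1^{(n)}=i,\ \mathbb{T}_2^{(n)}=j\right)=\frac{1}{n^{n-2}}\sum_{\substack{T\ \text{spanning tree of }K_n\\ \deg_T(1)=i,\ \deg_T(2)=j}}N(T),
$$
and the crucial combinatorial input is a usable description of $N(T)$, either a closed form or a local rule in terms of the structure of $T$ around $1$ and $2$ and along the path between them.

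\emph{Step 2: the generating function.} With such a description I would compute the exponential generating function $\sum_{n}\frac{x^n}{n!}\sum_{T}N(T)\,u^{\deg_T(1)}v^{\deg_T(2)}$ by the symbolic method: cut $T$ along the unique path joining $1$ and $2$, which splits it into the branch rooted at $1$, the branch rooted at $2$, and a rooted forest hanging off the interior of this path. Each factor is expressed through the classical rooted-tree series $A(x)=xe^{A(x)}$ and its forest companion $e^{A(x)}=\sum_{m\ge0}\frac{(m+1)^{m-1}}{m!}x^{m}$ --- this is where the factors $(n+1)^{n-1}$ and $(n-1)^{n-i-j-1}$ will originate after coefficient extraction, the latter via the generalized Cayley count $(i+j-1)(n-1)^{n-i-j-1}$ of rooted forests on $n-i-j$ vertices attached to $i+j-1$ marked points --- while $u$ and $v$ mark the degrees at $1$ and $2$. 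The three summands in the bracket of \eqref{main formula} are to come from the three mutually exclusive configurations of $1$ and $2$ in $T$: whether $\{1,2\}$ is an edge of $T$, and, when it is not, on which side of the $1$--$2$ path the remaining mass of the tree lies.

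\emph{Step 3: extraction and simplification.} Finally I would apply Lagrange inversion to $A(x)=xe^{A(x)}$ to pull out $[x^nu^iv^j]$ of the generating function, producing an expression in $n,i,j$ built from the falling factorial $n!/(n-i-j)!$, the power $(n-1)^{n-i-j-1}$, the forest count $(n+1)^{n-1}$, and the factorials $(i+j-1)!,\,i!\,j!,\,(i+j)!$; regrouping yields exactly \eqref{main formula}. One should then double-check consistency by letting $n\to\infty$, which must recover the limit of \cite[Corollary 1.2(iv)]{kor2019}.

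\emph{Main obstacle.} The real work is in Step 2: pinning down the weighted model so that $N(T)$ obeys a local decomposition, and then performing the generating-function surgery around the two marked vertices cleanly enough that precisely the three-term structure of \eqref{main formula} drops out. The coefficient extraction of Step 3, though essentially mechanical, also demands careful bookkeeping of the Lagrange-inversion factors and of the degenerate cases $i+j=n$ and $i=1$ or $j=1$.
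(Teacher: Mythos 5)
Your Step 1 reformulation is correct as far as it goes: in the vertex-labeled tree $\mathcal{F}(f)$ one indeed has $\mathbb{T}_k(f)=\deg_k(\mathcal{F}(f))$, so the probability in question is $n^{-(n-2)}\sum_T N(T)$ over spanning trees with prescribed degrees at $1$ and $2$. But the proof stops there: everything hinges on a ``usable description of $N(T)$'', which you explicitly defer, and on the unproven claim that $N(T)$ factors multiplicatively over the decomposition of $T$ along the $1$--$2$ path. Neither is routine. The trees supporting $N$ are (after Goulden--Yong) the non-crossing trees with a local compatibility condition on the cyclic order of neighbours, $N$ is not identically $1$, and it is not a product of local factors around $1$, around $2$, and along the path between them in any obvious sense; so the symbolic-method surgery of Step 2 has no established starting point. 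The predicted origin of the three bracketed terms (whether $\{1,2\}$ is an edge, and which side of the path carries the mass) is a guess that does not match where these terms actually come from. As written, this is a plausible research plan with its central lemma missing, not a proof.

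It is also worth noting that the paper takes a genuinely different route precisely to avoid $N(T)$. Via the bijection $\mathcal{E}$ onto edge-labeled rooted trees (and then $\alpha$ onto Cayley trees), a \emph{uniform} minimal factorization becomes a \emph{uniform} Cayley tree, so no weight $N(T)$ ever appears; the price is that $\mathbb{T}_2$ is no longer the degree of the vertex labeled $2$ but the degree $\deg_2'$ of the endpoint of a greedy increasing path $L_1$ started at the root. The computation then goes through an auxiliary generating function $F_n$ (with $\deg_2$ in place of $\deg_2'$) evaluated by tree-cutting recursions and Abel's binomial identities, a separately proved symmetry $G_n(x,y,z)=G_n(y,x,z)$, and coefficient extraction via the Lambert $W$ series --- none of which has a counterpart in your outline. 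If you want to pursue your route, the first task is to prove a product formula or transfer-matrix description for $N(T)$ along the $1$--$2$ path; without it the argument does not get off the ground.
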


\medskip

To show Theorem \ref{main thm}, we explicitly compute the exponential generating function of the (normalized) trivariate generating function $G_n$ defined by
$$
G_n(x,y,z) = n^{n-2}\mathbb{E} \left[x^{\mathbb{T}_1^{(n)}}  y^{\mathbb{T}_2^{(n)}} z^{\mathbb{M}_1^{(n)}} \right]
$$
where $\mathbb{M}_k^{(n)}= \# \{ 1 \leq \ell \leq n-1 ~: \tau_1^{(n)}(k) \dots \tau_{\ell-1}^{(n)}(k) \neq \tau_1^{(n)}(k) \dots \tau_{\ell}^{(n)}(k) \}$ is the number of transpositions that affect the trajectory of $k$, and then extract the coefficient $[x^iy^j]G_n(x, y, 1)$. To this end, there are 4 main steps. First, using a known bijection between minimal factorizations and Cayley trees, we reformulate the problem in terms of a generating function of a trivariate statistic on Cayley trees (Section \ref{bij min fact and cayley trees}). To compute this generating function, we actually start by computing another generating function $F_n$ obtained by changing one of the three statistics (Section \ref{computation of fn}). This also yields a result of independent
interest by confirming a conjecture \cite{caraceni} involving distributional symmetries in uniform Cayley trees (Corollary \ref{coro}). Finally, we show bijectively
that $G_n(x, y, z) = G_n(y, x, z)$ (Section \ref{a sym result}), and by combining this with the explicit formula of $F_n$ we get the exponential generating function of $G_n$ (Section \ref{comp EGF of gn}) and Theorem \ref{main thm} follows (Section \ref{proof main thm}).

\section{Bijection between minimal factorizations and Cayley trees} \label{bij min fact and cayley trees}

\subsection{The bijection} \label{the bij}

Here we explain how to associate a labeled tree with a minimal factorization which will be an essential tool for us. We refer to \cite{kor2019} for details and proofs. Fix an integer $n \geq 2$.

\begin{defi}
For $(\tau_1,\dots,\tau_{n-1})\in \mathfrak{M}_n$, we define a labeled tree $\mathcal{F} \left( \tau_1,\dots,\tau_{n-1} \right)$ with $n$ vertices labeled from 1 to $n$ where an edge labeled $l$ is drawn between the vertices labeled $a$ and $b$ if and only if $\tau_l = (a,b)$ (see figure \ref{ex of f} for an example).
\end{defi}

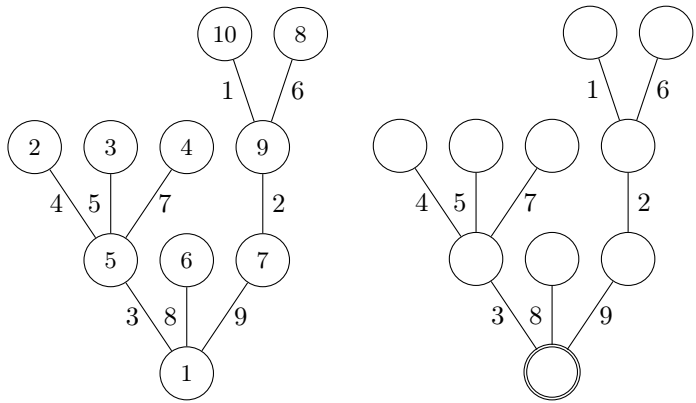
\begin{figure}[!h]
\begin{center}
    \begin{tikzpicture}
    [root/.style = {draw,circle,double,minimum size = 20pt,font=\small},
    vertex/.style = {draw,circle,minimum size = 20pt, font=\small}]
        \node[vertex] (1) at (0,0) {1};
        \node[vertex] (2) at (-1,1.5) {5};
        \node[vertex] (3) at (0,1.5) {6};
        \node[vertex] (4) at (1,1.5) {7};
        \node[vertex] (5) at (-2,3) {2};
        \node[vertex] (6) at (-1,3) {3};
        \node[vertex] (7) at (0,3) {4};
        \node[vertex] (8) at (1,3) {9};
        \node[vertex] (9) at (0.5,4.5) {10};
        \node[vertex] (10) at (1.5,4.5) {8};
        \draw (1) -- node[left] {3} (2) ;
        \draw (1) -- node[left] {8} (3);
        \draw (1) -- node[right] {9} (4);
        \draw (2) -- node[left] {4} (5);
        \draw (2) -- node[left] {5} (6);
        \draw (2) -- node[right] {7} (7);
        \draw (4) -- node[right] {2} (8);
        \draw (8) -- node[left] {1} (9);
        \draw (8) -- node[right] {6} (10);
    \end{tikzpicture}
    \hspace{1em}
    \begin{tikzpicture}
    [root/.style = {draw,circle,double,minimum size = 20pt,font=\small},
    vertex/.style = {draw,circle,minimum size = 20pt, font=\small}]
        \node[root] (1) at (0,0) {};
        \node[vertex] (2) at (-1,1.5) {};
        \node[vertex] (3) at (0,1.5) {};
        \node[vertex] (4) at (1,1.5) {};
        \node[vertex] (5) at (-2,3) {};
        \node[vertex] (6) at (-1,3) {};
        \node[vertex] (7) at (0,3) {};
        \node[vertex] (8) at (1,3) {};
        \node[vertex] (9) at (0.5,4.5) {};
        \node[vertex] (10) at (1.5,4.5) {};
        \draw (1) -- node[left] {3} (2) ;
        \draw (1) -- node[left] {8} (3);
        \draw (1) -- node[right] {9} (4);
        \draw (2) -- node[left] {4} (5);
        \draw (2) -- node[left] {5} (6);
        \draw (2) -- node[right] {7} (7);
        \draw (4) -- node[right] {2} (8);
        \draw (8) -- node[left] {1} (9);
        \draw (8) -- node[right] {6} (10);
    \end{tikzpicture}
\caption{Representation of $\mathcal{F}$ on the left and $\mathcal{E}$ on the right when $n=10$ for the minimal factorization $((9~10),(7~9),(1~5),(2~5),(3~5),(8~9),(4~5),(1~6),(1~7))$ of $(1\dots 10)$. The double circle represents the root of the tree $\mathcal{E}$.} 
\label{ex of f}
\end{center}
\end{figure}

Clearly $\mathcal{F}$ is injective since a minimal factorization can be easily read on its associated tree. Actually the tree $\mathcal{F}$ gives too much information, indeed it is still possible to retrieve the associated minimal factorization when we erase the vertex-labels and keep only the edge-labels. More precisely we have: 

\begin{defi}
If $(\tau_1,\dots,\tau_{n-1})\in \mathfrak{M}_n$, then we construct a rooted, edge-labeled tree $\mathcal{E}(\tau_1,\dots,\tau_{n-1})$ by doing the following on the tree $\mathcal{F}(\tau_1,\dots,\tau_{n-1})$:
\begin{itemize}
    \item We root the tree at the vertex labeled 1.
    \item We erase all the vertex-labels (and keep only the edge-labels).
\end{itemize}
\end{defi}

\begin{prop}
The map $\mathcal{E}$ gives a bijection between the set $\mathfrak{M}_n$ and the set $\mathfrak{C}'_n$ of rooted trees with $n-1$ edges labeled from 1 to $n-1$.
\label{e is a bij}
\end{prop}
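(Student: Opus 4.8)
The plan is to establish that $\mathcal{E}$ is a bijection by proving both injectivity and surjectivity, constructing an explicit inverse map. Since $|\mathfrak{M}_n| = n^{n-2}$ (Dénes) and the number of rooted trees on $n-1$ labeled edges (equivalently, $n$ unlabeled vertices one of which is the root, with edges bijectively labeled $1,\dots,n-1$) is also $n^{n-2}$ — a rooted Cayley tree on $n$ vertices has $n^{n-1}$ choices but rooting removes one vertex-label choice, or more directly one counts $(n-1)!$ edge-labelings times the number of unlabeled rooted trees — a counting argument alone would suffice once injectivity is known, but I would prefer to exhibit the inverse explicitly since it is used later.

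First I would recall from \cite{kor2019} the key structural fact underlying $\mathcal{F}$: for a minimal factorization, the tree $\mathcal{F}(\tau_1,\dots,\tau_{n-1})$ is indeed a tree (connected, $n-1$ edges on $n$ vertices), which follows because each transposition in a minimal factorization merges two distinct cycles of the partial product, so the edges never create a cycle in the graph and the final graph is connected. Injectivity of $\mathcal{F}$ is immediate (read off $\tau_l$ from the edge labeled $l$). The substantive point is that $\mathcal{E}$ remains injective after erasing vertex-labels: given a rooted edge-labeled tree $T$, I would reconstruct the vertex-labels inductively. Assign label $1$ to the root. Then process the edge labels in increasing order $l = 1, 2, \dots, n-1$; when edge $l$ is processed, exactly one of its two endpoints has already received a vertex-label (this is the claim to verify), say $a$, and then $\tau_l = (a~b)$ where $b$ is the not-yet-labeled endpoint, and one recovers $b$ as $b = \sigma_{l-1}^{-1}(a)$-type data — more concretely, the rule from \cite{kor2019} is that the partial product $\tau_{l-1}\circ\cdots\circ\tau_1$ acting appropriately determines which new label to attach, so $b$ is forced.

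The heart of the argument, and what I expect to be the main obstacle, is proving the claim that when the edges are revealed in increasing label order, each newly revealed edge attaches exactly one new vertex to the already-constructed labeled component — i.e., the edge labels, read in increasing order from the root, form a search order of the tree in which labels along any root-to-leaf path are increasing. This is a genuine constraint characterizing which edge-labelings arise, and it must be shown to hold for the image of $\mathcal{E}$ and to be exactly the condition allowing reconstruction; equivalently, one shows the correspondence matches the classical "increasing-tree"-flavored structure but here on edges. I would prove it by induction on $l$ using the minimal factorization property: since $\tau_l$ strictly decreases the number of cycles of the partial product at each step (minimality), the subgraph formed by edges $1,\dots,l$ is always a forest in which the tree containing the root is precisely the set of vertices reachable from $1$, and $\tau_l$ connects the root-tree to a fresh vertex; tracking this through gives that edge $l$ has exactly one endpoint among the previously-seen vertices. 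Once this is in hand, the reconstruction above is well-defined and inverts $\mathcal{E}$, and surjectivity follows because any rooted edge-labeled tree in $\mathfrak{C}'_n$ can be fed into the reconstruction to produce a genuine element of $\mathfrak{M}_n$ (one checks the resulting tuple multiplies to the $n$-cycle, again by the cycle-merging bookkeeping). This establishes that $\mathcal{E} \colon \mathfrak{M}_n \to \mathfrak{C}'_n$ is a bijection.
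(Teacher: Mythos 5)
There is a genuine gap, and it sits exactly where you locate ``the heart of the argument.'' Your key structural claim --- that when the edges of $\mathcal{E}(\tau_1,\dots,\tau_{n-1})$ are revealed in increasing label order, each new edge attaches exactly one fresh vertex to the already-labeled component containing the root, equivalently that edge labels increase along every root-to-leaf path --- is false. The paper's own Figure~\ref{ex of f} is a counterexample: for the factorization $((9~10),(7~9),(1~5),(2~5),(3~5),(8~9),(4~5),(1~6),(1~7))$ the edge labeled $1$ joins the vertices labeled $9$ and $10$, neither of which is the root, so already at step $l=1$ neither endpoint of the revealed edge carries a vertex-label and your reconstruction rule (``exactly one endpoint has already received a label'') has nothing to work with; moreover the root-to-leaf path $1\to 7\to 9\to 10$ carries edge labels $9,2,1$, which decrease. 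What is true is only that the edges $1,\dots,l$ always form a forest (each transposition in a minimal factorization merges two cycles of the partial product), but the components of that forest need not contain the root, so ``the tree containing the root is precisely the set of vertices reachable from $1$'' does not force $\tau_l$ to touch the root component. Your claim would also contradict the statement itself: if the image of $\mathcal{E}$ satisfied a nontrivial ``increasing along root-to-leaf paths'' constraint, $\mathcal{E}$ could not be surjective onto all of $\mathfrak{C}'_n$, and indeed you assert both the constraint and unrestricted surjectivity in the same paragraph.

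This gap also disables your counting fallback: $|\mathfrak{M}_n|=|\mathfrak{C}'_n|=n^{n-2}$ reduces the proposition to injectivity of $\mathcal{E}$, but injectivity is precisely the hard part (injectivity of $\mathcal{F}$ is trivial; the issue is recovering the erased vertex-labels), and your argument for it is the broken reconstruction. The correct inverse, described in Section~\ref{the bij} of the paper (and proved in the cited reference), is genuinely different: one does not scan edges in increasing label order from the root, but rather recovers the vertex labels $1,2,\dots,n$ one at a time, locating the vertex $k+1$ by starting at the vertex already labeled $k$, taking the smallest-labeled edge incident to it, and then repeatedly following the smallest incident edge label exceeding the previous one until the path cannot be extended. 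Any repair of your proof would have to replace your search-order claim by an analysis of this path-following rule (showing it is well defined on every tree in $\mathfrak{C}'_n$, visits each vertex exactly once, and that the resulting transpositions compose to the $n$-cycle).
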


The set $\mathfrak{C}'_n$ is clearly in bijection with the set $\mathfrak{C}_n$ of Cayley trees with $n$ vertices (\textit{i.e.} trees with $n$ vertices labeled from 1 to $n$). Indeed if $t \in \mathfrak{C}_n$ we create $\alpha(t) \in \mathfrak{C}'_n$ by rooting the tree $t$ at the vertex labeled 1, then by pulling all the vertex-labels (except 1 which is erased from $\alpha(t)$) towards the root into the nearest edge. We then subtract 1 from all the labels (see figure \ref{ex of t to t'} for an example). The map $\alpha$ is clearly a bijection.

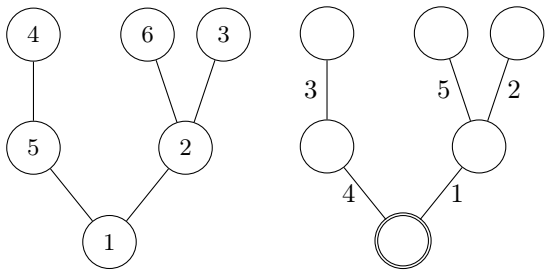
\begin{figure}[!h]
\begin{center}
    \begin{tikzpicture}
    [root/.style = {draw,circle,double,minimum size = 20pt,font=\small},
    vertex/.style = {draw,circle,minimum size = 20pt, font=\small}]
        \node[vertex] (1) at (0,0) {1};
        \node[vertex] (2) at (-1,1.25) {5};
        \node[vertex] (3) at (1,1.25) {2};
        \node[vertex] (4) at (-1,2.75) {4};
        \node[vertex] (5) at (0.5,2.75) {6};
        \node[vertex] (6) at (1.5,2.75) {3};

        \draw (1) -- (2) ;
        \draw (1) -- (3);
        \draw (2) -- (4);
        \draw (3) -- (5);
        \draw (3) -- (6);
    \end{tikzpicture}
    \hspace{1 em}
    \begin{tikzpicture}
    [root/.style = {draw,circle,double,minimum size = 20pt,font=\small},
    vertex/.style = {draw,circle,minimum size = 20pt, font=\small}]
        \node[root] (1) at (0,0) {};
        \node[vertex] (2) at (-1,1.25) {};
        \node[vertex] (3) at (1,1.25) {};
        \node[vertex] (4) at (-1,2.75) {};
        \node[vertex] (5) at (0.5,2.75) {};
        \node[vertex] (6) at (1.5,2.75) {};

        \draw (1) -- node[left] {4} (2) ;
        \draw (1) -- node[right] {1} (3);
        \draw (2) -- node[left] {3} (4);
        \draw (3) -- node[left] {5} (5);
        \draw (3) -- node[right] {2} (6);
    \end{tikzpicture}    
\caption{A tree $t \in \mathfrak{C}_6$ on the left transformed into $\alpha(t) \in \mathfrak{C}'_6$ on the right by pulling the vertex-labels towards the root and subtracting 1.} 
\label{ex of t to t'}
\end{center}
\end{figure}

\medskip

In particular $\mathfrak{M}_n$ has the same cardinality as the set of Cayley trees with $n$ vertices $\mathfrak{C}_n$ which is known to be $n^{n-2}$. This explains the renormalizing term in the definition of $G_n$. In the article \cite{kor2019} the authors give an explicit algorithm to find a minimal factorization back from its associated tree $\mathcal{E}$. We explain this algorithm right now because it will be essential for what follows. 

\medskip

Let $t \in  \mathfrak{C}'_n$ and $f \in \mathfrak{M}_n$ such that $t = \mathcal{E}(f)$. To recover $f$ from $t$, we will recover $\mathcal{F}(f)$ (from which it is immediate to find $f$ back). For this purpose, we will gradually assign the vertex-labels $1, 2, \dots, n$ to $t$ as they can be found in $\mathcal{F}(f)$. More precisely, let us describe an algorithm \verb&Next& that takes the tree $t$ and an integer $k \in \{1,\dots,n-1\}$ as arguments and assigns the vertex-label $k+1$ to $t$ if the vertex-label $k$ has already been assigned in $t$ (if $k$ is not assigned yet \verb&Next&(t,k) does nothing). The algorithm starts from the vertex $v_0$ of $t$ labeled $k$. Then it follows the longest possible path of edges $e_1,\dots,e_\ell$ in $t$ such that:
\begin{itemize}
    \item $e_1,\dots,e_\ell$ is a \textit{path of edges} meaning that for all $0<i<\ell$, $e_i$ and $e_{i+1}$ share a common vertex $v_i$.
    \item $e_1$ is the edge with smallest label adjacent to $v_0$.
    \item For all $0<i<\ell$, $e_{i+1}$ has the smallest label among the edges adjacent to $v_i$ having a label greater than $e_i$'s label.
\end{itemize}

Starting from $v_0$ and following the path $e_1,\dots,e_\ell$ leads to the vertex of $t$ which then gets the label $k+1$. We also initiate the algorithm with $\verb&Next&(t,0)$ which gives label 1 to the root of $t$.

\medskip

Now by applying successively $\verb&Next&$ to $t$ we can label all the vertices of $t$ in order to find back $\mathcal{F}(f)$. More precisely we have the following definition and Proposition:

\begin{defi}
For $1 \leq k \leq n$ and $t \in \mathfrak{C}'_n$, we denote by $\verb&Find&_k(t)$ the tree obtained by applying successively $\verb&Next&(t,0)$ then $\verb&Next&(t,1)$, $\verb&Next&(t,2)$, $\dots$, $\verb&Next&(t,k-1)$ to $t$.
\end{defi}

For example, applying $\verb&Find&_{10}$ to the tree on the right of figure \ref{ex of f} gives back the vertex-labels on the left of figure \ref{ex of f}.

\begin{prop}
For $t = \mathcal{E}(f) \in \mathfrak{C}'_n$:
$$
\verb&Find&_n(t) = \mathcal{F}(f).
$$
\end{prop}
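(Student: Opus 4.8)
The plan is to prove, by induction on $k\in\{1,\dots,n\}$, the following strengthening: after running $\texttt{Find}_k(t)$, every vertex of $t$ carrying a label $j\le k$ is the vertex carrying label $j$ in $\mathcal F(f)$. The base case $k=1$ is immediate, since $\texttt{Next}(t,0)$ labels the root $1$ and by definition $\mathcal E$ roots $\mathcal F(f)$ at the vertex labelled $1$. For the induction step it suffices to prove the purely combinatorial claim: writing $v_k$ and $v_{k+1}$ for the vertices labelled $k$ and $k+1$ in $\mathcal F(f)$, the maximal increasing edge-path out of $v_k$ selected by $\texttt{Next}$ (leave $v_k$ along its least-labelled edge; at each new vertex continue along the least-labelled edge whose label exceeds the one just used; stop when none exists) terminates exactly at $v_{k+1}$. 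Thus everything reduces to a deterministic statement about where the vertex-labels of $\mathcal F(f)$ sit relative to the edge-labels, which I would extract from the partial products of the factorization.

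Write $\sigma_\ell=\tau_\ell\circ\cdots\circ\tau_1$, so $\sigma_0=\mathrm{id}$ and $\sigma_{n-1}=(1\,2\dots n)$. First I would record two facts. Minimality forces the number of cycles of $\sigma_\ell$ to drop by exactly one at each step, so each $\tau_\ell$ merges two cycles of $\sigma_{\ell-1}$; hence the cycles of $\sigma_\ell$ are precisely the vertex-sets of the connected components of the forest $F_\ell$ formed by the edges labelled $1,\dots,\ell$ of $\mathcal F(f)$, and $F_{n-1}=\mathcal F(f)$. Second, the structural key: every cycle of every $\sigma_\ell$, written from its smallest element, is strictly increasing. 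This I would prove by downward induction from $\sigma_{n-1}=(1\,2\dots n)$: passing from $\sigma_\ell$ to $\sigma_{\ell-1}$ replaces one cycle by the two cycles obtained by transposing two of its elements, and a short computation shows that transposing two elements of an ``increasing-from-minimum'' cycle yields two ``increasing-from-minimum'' cycles.

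Granting this, $\sigma_\ell(k)$ is always the cyclic successor of $k$ inside its cycle, i.e.\ the smallest element $>k$ in its $F_\ell$-component (or the component's minimum if there is none). As $\ell$ grows, this value can only change at a merge that introduces an element strictly between $k$ and the current successor; since no integer lies strictly between $k$ and $k+1$, once $\sigma_\ell(k)=k+1$ it stays so, and there is a first index $\ell_k$ with $\sigma_{\ell_k}(k)=k+1$. Tracking the finitely many merges at which $\sigma_\bullet(k)$ actually changes, I would identify their edges with the successive edges of the $v_k$–$v_{k+1}$ path in $\mathcal F(f)$, read from $v_k$; the times of these merges increase, so their labels increase along the path, and the increasing-from-minimum structure of the relevant cycles forces the first of them to be the least-labelled edge at $v_k$, each subsequent one to be the least admissible larger label at the current vertex, and $v_{k+1}$ to carry no still-larger label — which is exactly the path $\texttt{Next}(t,k)$ follows. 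Feeding this back into the induction gives $\texttt{Find}_n(t)=\mathcal F(f)$.

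The hard part will be precisely this last identification: showing that the sequence of positions visited by the trajectory of $k$ is a simple path (so it is the $v_k$–$v_{k+1}$ path and not a walk that backtracks), and that the edge-labels along it interlock with the labels of the edges hanging off that path in exactly the pattern the algorithm demands. This is the bookkeeping that underlies the recovery algorithm of \cite{kor2019}; once it is in place the induction itself is routine.
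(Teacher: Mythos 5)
The paper itself offers no proof of this proposition; it is quoted from \cite{kor2019}. Your outline is the natural self-contained argument, and its first two ingredients are sound: minimality forces each $\tau_\ell$ to merge two cycles of $\sigma_{\ell-1}$ (so $\mathcal F(f)$ is a tree and the components of the forest $F_\ell$ are the cycles of $\sigma_\ell$), and the induction on $k$ correctly reduces everything to the claim that the greedy increasing edge-path of \texttt{Next} started at $v_k$ ends at $v_{k+1}$. The problem is that you then declare this reduction's target --- ``showing that the sequence of positions visited by the trajectory of $k$ is a simple path \dots and that the edge-labels along it interlock \dots in exactly the pattern the algorithm demands'' --- to be ``the hard part'' and leave it as unperformed bookkeeping. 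That identification \emph{is} the entire content of the proposition, so as written the proof has a genuine hole: nothing you have established actually forces the first edge of the trajectory to be the least-labelled edge at $v_k$, nor each subsequent edge to be the least admissible larger label.

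The gap is real but closable, and more cheaply than your sketch suggests: the detour through ``increasing-from-minimum'' cycles is not needed. Simply track the trajectory $\sigma_0(k)=k,\ \sigma_1(k),\ \dots,\ \sigma_{n-1}(k)=k+1$ using $\sigma_\ell(k)=\tau_\ell(\sigma_{\ell-1}(k))$. The value changes at step $\ell$ precisely when $\tau_\ell$ contains the current position $c=\sigma_{\ell-1}(k)$, and it then moves across the edge of $\mathcal F(f)$ labelled $\ell$ incident to $c$. Hence: the first move occurs at the smallest edge-label incident to $v_k$; after arriving at $c$ via edge $\ell$, the next move occurs at the first later time whose transposition contains $c$, i.e.\ along the edge at $c$ of smallest label exceeding $\ell$; and the trajectory halts at a vertex none of whose incident labels exceeds the last one used. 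This is verbatim the rule defining \texttt{Next}, so the two paths coincide edge by edge. Each edge-label is used at most once, so the walk repeats no edge, and an edge-injective walk in a tree is a simple path; its endpoint is the vertex whose label is $\sigma_{n-1}(k)=k+1$. Inserting this argument where you wrote ``this is the bookkeeping'' completes your induction.
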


\subsection{Reformulation in terms of Cayley trees}

In order to prove Theorem \ref{main thm}, we start with rewriting the generating function $G_n$ defined in the Introduction in terms of Cayley trees. To this purpose we introduce some notation. For $A$ a subset of $\{1,\dots,n\}$ we denote by $\mathfrak{C}_A$ the set of trees with $|A|$ vertices labeled in a one-to-one manner with the elements of $A$. Notice that if $A = \{1,\dots,n\}$ then $\mathfrak{C}_A=\mathfrak{C}_n$ is the set of \textit{Cayley trees} with $n$ vertices. If $t \in \mathfrak{C}_A$ and $i \in A$, we denote by $\deg_i(t)$ the degree of the vertex $i$ (where "the vertex $i$" has to be understood as "the vertex labeled $i$") in the tree $t$. Also, similarly to the algorithm \verb&Next& (introduced in the previous Section) we consider a certain path in $t$ starting from $i$, namely:

\begin{defi}
For $t \in \mathfrak{C}_A$ and $i \in A$ we consider the longest path of vertices, starting from $i$, such that each vertex of the path has the smallest label among the ones that are both adjacent to the previous vertex on the path and have a greater label than the label of the previous vertex on the path. We denote by $L_i(t)$ the set of vertices composing this path but without including the first one which is the vertex $i$ (see figure \ref{ex of Li(t)} for an example).
\end{defi}

\begin{figure}[!h]
\begin{center}
    \begin{tikzpicture}
    [root/.style = {draw,circle,double,minimum size = 20pt,font=\small},
    vertex/.style = {draw,circle,minimum size = 20pt, font=\small}]
        \node[vertex,red,thick] (1) at (0,0) {\textbf{3}};
        \node[vertex] (2) at (-1,1.5) {1};
        \node[vertex] (3) at (0,1.5) {2};
        \node[vertex,red,thick] (4) at (1,1.5) {\textbf{4}};
        \node[vertex] (5) at (-2,3) {11};
        \node[vertex] (6) at (-1,3) {12};
        \node[vertex,red,thick] (7) at (1,3) {\textbf{8}};
        \node[vertex] (8) at (2,3) {9};
        \node[vertex] (10) at (0.5,4.5) {5};
        \node[vertex] (11) at (1.5,4.5) {7};
        \draw[red,thick] (1) -- (2) ;
        \draw (1) -- (3);
        \draw[red,thick] (1) -- (4);
        \draw (2) -- (5);
        \draw (2) -- (6);
        \draw[red,thick] (4) -- (7);
        \draw (4) -- (8);
        \draw (7) -- (10);
        \draw (7) -- (11);
    \end{tikzpicture}
\caption{Example of a tree $t \in \mathfrak{C}_A$ with $A=\{1,2,3,4,5,7,8,9,11,12\}$. The path $L_1(t)=\{3,4,8\}$ starting from 1 is represented in thick red. Here $\deg'_2(t) = \deg_8(t) = 3$.} 
\label{ex of Li(t)}
\end{center}
\end{figure}
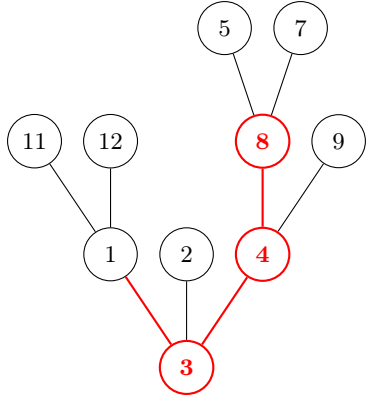

\begin{defi}
For $t \in \mathfrak{C}_n$ we denote by $\deg'_2(t)$ the degree, in $t$, of the last vertex of the path $L_1(t)$ (see figure \ref{ex of Li(t)} for an example).
\end{defi}

Recall from Section \ref{the bij} the bijection $\mathcal{E}$ between $\mathfrak{M}_n$ and $\mathfrak{C}'_n$ as well as a bijection $\alpha$ from $\mathfrak{C}_n$ to $\mathfrak{C}'_n$ (illustrated in figure \ref{ex of t to t'}). Notice that if $f \in \mathfrak{M}_n$ and $t=\alpha \circ \mathcal{E}(f)$ then
$$
(\mathbb{T}_1(f),\mathbb{T}_2(f),\mathbb{M}_1(f))=(\deg_1(t),\deg'_2(t), |L_1(t)|).
$$
where $\mathbb{T}_i(f)$ is the number of times $i$ appears in $f$ and $\mathbb{M}_j(f)$ is the number of transpositions in $f$ that affect the trajectory of $j$. The last identity allows us to reformulate the definition of $G_n$ for $n \geq 2$:
\begin{equation}
G_n(x,y,z) = \sum_{t \in \mathfrak{C}_n} x^{\deg_1(t)} y^{\deg'_2(t)} z^{|L_1(t)|}.
\label{gn with cayley trees}
\end{equation}
In the next Section, in order to compute $G_n$, we introduce another generating function $F_n$ whose definition is similar to $(\ref{gn with cayley trees})$ except that $\deg'_2$ is replaced with $\deg_2$.

\section{First generating function on Cayley trees} \label{computation of fn}

\noindent For $n\geq 2$ we define the generating function:
$$
F_n(x,y,z) = \sum_{t \in \mathfrak{C}_n} x^{\deg_1(t)} y^{\deg_2(t)} z^{|L_1(t)|}.
$$
By convention we also set for $n \geq 1$:
$$
F_n(x,1,1)=F_n(x)=\sum_{t \in \mathfrak{C}_n} x^{\deg_1(t)}=x(n-1+x)^{n-2}.
$$
The last equality is a well known result on Cayley trees. Actually, it turns out we have an explicit formula for $F_n(x,y,z)$.

\begin{prop}
For $n \geq 3$,
\begin{align}
    \nonumber
    F_n(x,y,z) = xyz & \left[ x\left( n-2+x \right)^{n-3} \left( 1 - \frac{yz}{z+y-1} \right) \right. \\
    &  \left. + \left( x+y+z+n-3 \right)^{n-3} \left( n-2+yz+\frac{xyz}{z+y-1} \right) \right].
    \label{formula fn}
\end{align}
\label{prop computation of fn}
\end{prop}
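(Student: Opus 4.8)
The plan is to compute $F_n$ by decomposing a Cayley tree $t \in \mathfrak{C}_n$, rooted at vertex $1$, along the path $L_1(t)$. Write $u_0 = 1, u_1, \dots, u_k$ for the successive vertices of this path, so that $u_i$ is a child of $u_{i-1}$ and $|L_1(t)| = k \geq 1$, and let $1 = m_0 < m_1 < \dots < m_k$ be their labels; set also $m_{k+1} := n+1$. A short analysis of the greedy rule defining $L_1$ shows that the spine admits the following clean description: each $u_i$ ($0 \le i \le k$) carries an arbitrary family of \emph{side subtrees} hanging below it, subject only to the constraint that a child of $u_i$ which does not continue the spine must have a label outside the open interval $(m_i, m_{i+1})$; the internal structure of the side subtrees is otherwise free. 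Since $\deg_1(t)$ is the number of children of $1$, since $|L_1(t)| = k$, and since vertex $2$ lies on the spine exactly when it is adjacent to $1$ — equivalently when $u_1 = 2$ — this reduces the evaluation of $F_n(x,y,z)$ to a sum over $k$, over the spine labels, and over the hanging forests, with $x$ recording the number of children of $1$, $z$ recording $k$, and $y$ recording $\deg_2(t)$.

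The second ingredient is the classical multivariate tree enumeration: $\sum_{t \in \mathfrak{C}_N} x^{\deg_1(t)} = x(N-1+x)^{N-2}$ (already recalled for $F_N(x,1,1)$), together with its forest analogue — for a forest on $N$ vertices with a prescribed set of roots of total weight $w$ and $r$ components, the weighted count is $w(w+N-r)^{N-r-1}$. Summing out all side subtrees turns each spine configuration into a product of such factors, and the dichotomy ``$2$ lies on the spine'' (i.e.\ $u_1 = 2$, and $y$ is carried at a spine vertex) versus ``$2$ lies strictly inside a side subtree'' (and $y$ is attached, through the forest formula, to an ordinary interior vertex of a hanging tree) is what produces the two bracketed terms of $(\ref{formula fn})$. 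Concretely I would package everything as an exponential generating function in an auxiliary variable $s$, using the tree series $T = T(s)$ with $T = se^{T}$ and the identities $(n-2)!\,[s^{n-2}]\,e^{xT} = x(n-2+x)^{n-3}$ and $(n-3)!\,[s^{n-3}]\,\frac{e^{(x+y+z)T}}{1-T} = (x+y+z+n-3)^{n-3}$, so that the closed form of $F_n$ drops out by Lagrange inversion.

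The step I expect to be the main obstacle is carrying out the constrained summation: one has to count the hanging forests while respecting the interval-avoidance conditions — which amounts to bookkeeping how the $n-1-k$ non-spine labels distribute among the gaps $(m_i, m_{i+1})$ — and then sum the result over the spine length $k \geq 1$. This last sum is geometric-like and is precisely the source of the rational function $\frac{yz}{z+y-1}$, which appears twice with opposite signs because the same sub-sum over $k$ arises in both cases of the dichotomy but with different tree-volume prefactors (for $n = 3$ the two prefactors coincide and these pieces cancel, as they must). Once the closed form is in hand, two checks confirm it: setting $y = z = 1$ kills the first bracket ($1 - \frac{yz}{z+y-1} = 0$) and collapses the rest to $x(n-1+x)^{n-2} = F_n(x,1,1)$; and for $n = 3$ a direct enumeration of the three trees of $\mathfrak{C}_3$ gives $F_3(x,y,z) = x^2yz + xy^2z^2 + xyz = xyz(x+1+yz)$, which matches $(\ref{formula fn})$ after the $\frac{xyz}{z+y-1}$ terms cancel.
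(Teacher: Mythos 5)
There is a genuine gap: your argument is a plan rather than a proof, and the part you defer --- the ``constrained summation'' --- is precisely the entire difficulty. Your structural description of the spine is correct (a child of $u_i$ off the spine must have label outside $(m_i,m_{i+1})$), and your two sanity checks ($y=z=1$ and $n=3$) are valid, but they only verify the target formula. What is missing is the actual enumeration: you must sum over $k$, over the label sequences $1=m_0<m_1<\dots<m_k$, and over the hanging forests, where each forest component's root label is constrained relative to the $m_i$ it hangs from. These constraints couple the choice of spine labels to the placement of the remaining $n-1-k$ labels, which destroys the exchangeability that the plain EGF/Lagrange-inversion identities you quote rely on; you give no mechanism for handling this coupling, and simply assert that the sum over $k$ ``is geometric-like and is precisely the source of $\frac{yz}{z+y-1}$.'' In addition, the case where $2$ sits strictly inside a side subtree requires a \emph{bivariate} forest formula marking the degree of a distinguished non-root vertex; the univariate root-weight formula $w(w+N-r)^{N-r-1}$ you cite does not record $\deg_2$, and you never state or prove the refinement you would need. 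As written, neither bracketed term of (\ref{formula fn}) is actually derived.

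For comparison, the paper avoids the spine decomposition entirely. It first proves the case $y=1$ by cutting off the subtree rooted at vertex $n$, obtaining a convolution recursion which shows by induction that $F_n(x,1,z)/(xz)$ depends only on $x+z$; combined with the known evaluation $F_n(x,1,1)=x(n-1+x)^{n-2}$ this pins down $F_n(x,1,z)$. For general $y$ it cuts instead at vertex $2$, expressing $F_n(x,y,z)$ as a sum of products $F_a(x,1,z)F_{n-a}(y)$ and $F_a(x)F_{n-a}(y,1,z)$, and evaluates the resulting sums with variants of Abel's binomial theorem. If you want to salvage your spine approach, you would need to (i) establish the refined forest formula tracking $\deg_2$, and (ii) carry out the sum over spine label sequences with the interval-avoidance conditions explicitly --- at which point you would essentially be reproving an Abel-type identity by hand.
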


Formula (\ref{formula fn}) still holds in the cases $(y=1,n=2)$ and $(y=z=1,n=1)$. Proposition \ref{prop computation of fn} implies that $F_n$ is symmetric in $y$ and $z$ thus we have the following Corollary which confirms a conjecture made by Caraceni \cite{caraceni}:

\begin{cor}
Let $T_n$ be a random uniform Cayley tree with $n$ vertices, then $(\deg_1(T_n),|L_1(T_n)|,\deg_2(T_n))$ and $(\deg_1(T_n),\deg_2(T_n),|L_1(T_n)|)$ have the same law.
\label{coro}
\end{cor}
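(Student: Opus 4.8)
The plan is to read off Corollary~\ref{coro} as an immediate consequence of the explicit formula in Proposition~\ref{prop computation of fn}. First I would note that, since $T_n$ is uniform on $\mathfrak{C}_n$ and $|\mathfrak{C}_n| = n^{n-2}$, the very definition of $F_n$ gives
$$
\mathbb{E}\left[x^{\deg_1(T_n)} y^{\deg_2(T_n)} z^{|L_1(T_n)|}\right] = \frac{1}{n^{n-2}}\, F_n(x,y,z),
$$
so that $F_n$ is, up to the normalizing constant $n^{n-2}$, the joint probability generating function of the triple $(\deg_1(T_n), \deg_2(T_n), |L_1(T_n)|)$.

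Next I would observe that the right-hand side of~(\ref{formula fn}) is visibly invariant under the exchange $y \leftrightarrow z$: the prefactor $xyz$ is symmetric in $y$ and $z$, and inside the bracket the variables $y$ and $z$ enter only through the symmetric combinations $yz$, $y+z$ (in the factor $(x+y+z+n-3)^{n-3}$) and $y+z-1$ (in the denominators $z+y-1$). Hence $F_n(x,y,z) = F_n(x,z,y)$ for every $n \ge 3$; the cases $n=1$ and $n=2$ are trivial, since then $T_n$ is deterministic and the two triples coincide pointwise.

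Combining these two observations,
$$
\mathbb{E}\left[x^{\deg_1(T_n)} y^{\deg_2(T_n)} z^{|L_1(T_n)|}\right]
= \frac{F_n(x,y,z)}{n^{n-2}}
= \frac{F_n(x,z,y)}{n^{n-2}}
= \mathbb{E}\left[x^{\deg_1(T_n)} z^{\deg_2(T_n)} y^{|L_1(T_n)|}\right].
$$
Since $\deg_1(T_n)$, $\deg_2(T_n)$ and $|L_1(T_n)|$ all take values in the finite set $\{0,1,\dots,n-1\}$, equality of these trivariate polynomials in $x,y,z$ forces equality of all coefficients, i.e. of the joint probabilities $\mathbb{P}(\deg_1(T_n) = a,\, \deg_2(T_n) = b,\, |L_1(T_n)| = c)$ with the roles of $b$ and $c$ interchanged. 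This is precisely the assertion that $(\deg_1(T_n),|L_1(T_n)|,\deg_2(T_n))$ and $(\deg_1(T_n),\deg_2(T_n),|L_1(T_n)|)$ have the same law.

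There is essentially no obstacle at this stage: all the substance lies in Proposition~\ref{prop computation of fn}, whose proof must establish the closed form of $F_n$. The only points deserving a word of care here are checking that one has genuinely exhibited $F_n$ as a function of the symmetric quantities $yz$, $y+z$ and $y+z-1$ (so the $y\leftrightarrow z$ symmetry is manifest rather than accidental), and the routine remark that equality of probability generating functions of bounded integer-valued random vectors is equivalent to equality in distribution.
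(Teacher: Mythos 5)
Your proposal is correct and follows exactly the paper's route: the paper likewise deduces the corollary directly from the manifest $y\leftrightarrow z$ symmetry of the explicit formula~(\ref{formula fn}), reading $F_n/n^{n-2}$ as the joint probability generating function. The added remarks on the trivial small-$n$ cases and on equality of generating functions implying equality in law are fine but not needed beyond what the paper states.
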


It would be very interesting to obtain a direct bijective proof of Corollary \ref{coro}. Formula (\ref{formula fn}) with $y=1$ was first conjectured in \cite[Conjecture 1.4]{kor2019} and was proved by O. Angel \& J. Martin \cite{angel}. Below, we give Angel and Martin's proof of the case $y=1$ which will be useful to deduce the general case. 

\begin{proof}[Proof of Proposition \ref{prop computation of fn} for $y=1$]

We show by induction on $n\geq 2$ that $F_n(x,1,z) = xzf_n(x+z)$ where $f_n$ is a real-valued function. It is obviously true for $n=2$ with $f_2=1$. Suppose that is true for all $2 \leq k \leq n-1$ with $n \geq 3$. 

\medskip

We denote by $\mathcal{P}_n$ the set of couples $(A,B)$ with $A$ and $B$ two subsets of $\{1,\dots,n\}$ such that $A \cup B = \{1,\dots,n\}$, $A \cap B = \varnothing$, $1 \in A$ and $n \in B$. Let $t\in \mathfrak{C}_n$. Set the vertex 1 to be the root of $t$. Consider cutting the tree $t$ by removing the edge between the vertex $n$ and its parent to end up with two trees $t_1 \in \mathfrak{C}_A$ and $t_2 \in \mathfrak{C}_B$ for some $(A,B) \in \mathcal{P}_n$. Actually given two trees $t_1 \in \mathfrak{C}_A$ and $t_2 \in \mathfrak{C}_B$ with $(A,B) \in \mathcal{P}_n$ there are $|A|$ distinct ways to attach $t_2$ to $t_1$ by joining the vertex $n+1$ of $t_2$ to one of $t_1$'s vertices to obtain a tree $t \in \mathfrak{C}_n$. With this in mind we can decompose the quantity $F_n(x,1,z)$ depending on where $t_2$ is attached to $t_1$:
\begin{equation}
    F_n(x,1,z) = xz + \sum_{\substack{(A,B) \in \mathcal{P}_n \\ |A|>1}} \sum_{\substack{t_1 \in \mathfrak{C}_A \\ t_2 \in \mathfrak{C}_B}} (|A|-2)x^{\deg_1(t_1)}z^{|L_1(t_1)|} + x^{\deg_1(t_1)+1}z^{|L_1(t_1)|} + x^{\deg_1(t_1)}z^{|L_1(t_1)|+1}.
    \label{tree cutting}
\end{equation}
The first term corresponds to the case where $t_1$ has only 1 vertex (i.e. $|A|=1$). If $|A|>1$ then the vertex 1 and the last vertex of the path $L_1(t_1)$ are distinct in $t_1$ and we have to consider three cases: 1) we attach $t_2$ to a vertex of $t_1$ which is neither 1, nor the last vertex of $L_1(t_1)$. 2) We attach $t_2$ to the vertex 1. 3) We attach $t_2$ to the last vertex of $L_1(t_1)$. We then have:
$$
    F_n(x,1,z) = xz + \sum_{a=2}^{n-1} \binom{n-2}{a-1} F_a(x,1,z)(a-2 + x+z).
$$
By induction we conclude that:
$$
    F_n(x,1,z) = xz + xz\sum_{a=2}^{n-1} \binom{n-2}{a-1} f_a(x+z)(a-2 + x+z).
$$
So $F_n(x,1,z)/(xz)$ depends only on $x+z$, thus induction is shown. We then just need to take $z=1$ and use the case $y=z=1$ to conclude.

\end{proof}

To prove the general case of Proposition \ref{prop computation of fn} we will use the particular cases $y=1$ and $z=1$. We will also use the following Abel's binomial Theorem \cite[p.\hspace{0.2em}18]{riordan}.

\begin{prop}
For every integer $n\geq 0$ the following identity holds:
$$
    \sum_{k=0}^{n} \binom{n}{k} x (x-kz)^{k-1} (y+kz)^{n-k} = (x+y)^n.
$$
\end{prop}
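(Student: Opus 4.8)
The plan is to prove the identity by induction on $n$, using differentiation in $y$ to lower the exponent. Write $A_n(x,y)=\sum_{k=0}^n\binom{n}{k}x(x-kz)^{k-1}(y+kz)^{n-k}$ for the left-hand side, reading the $k=0$ summand as $x\cdot x^{-1}\cdot y^n=y^n$; treating $z$ as an indeterminate, the whole expression is a rational-function identity in $\mathbb{Q}(x,y,z)$, so this convention is harmless. The goal is $A_n(x,y)=(x+y)^n$ for all $n\geq 0$, and the base case $n=0$ is immediate since $A_0=1=(x+y)^0$.

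For the inductive step I would assume $A_{n-1}(x,y)=(x+y)^{n-1}$ and differentiate $A_n$ with respect to $y$. The $k=n$ term drops out, and since $\binom{n}{k}(n-k)=n\binom{n-1}{k}$ for $k\leq n-1$ one gets
$$
\frac{\partial A_n}{\partial y}(x,y)=n\sum_{k=0}^{n-1}\binom{n-1}{k}x(x-kz)^{k-1}(y+kz)^{n-1-k}=n\,A_{n-1}(x,y)=n(x+y)^{n-1},
$$
which is exactly $\frac{\partial}{\partial y}(x+y)^n$. Hence, as a polynomial in $y$ (with $x,z$ fixed), the difference $A_n(x,y)-(x+y)^n$ does not depend on $y$.

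It then remains to identify this constant by evaluating at one value of $y$, and the useful choice is $y=-x$ rather than $y=0$. At $y=-x$ the factor $(y+kz)^{n-k}=(-(x-kz))^{n-k}$ combines with $(x-kz)^{k-1}$ to give
$$
A_n(x,-x)=x\sum_{k=0}^n\binom{n}{k}(-1)^{n-k}(x-kz)^{n-1}.
$$
Because $k\mapsto(x-kz)^{n-1}$ is a polynomial in $k$ of degree $n-1<n$, the standard fact that the $n$-th finite difference of a polynomial of degree $<n$ vanishes gives $A_n(x,-x)=0=(x+(-x))^n$ for $n\geq 1$. Together with the $y$-independence of the difference, this forces $A_n(x,y)=(x+y)^n$ and closes the induction.

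I do not expect a real obstacle here; the one step that needs care is the specialization $y=-x$, which is what turns the remaining sum into an iterated finite difference of a low-degree polynomial (the naive choice $y=0$ leaves an unwieldy sum), and one should keep in mind the formal reading of the $k=0$ term. If a bijective proof were desired instead, I would fall back on interpreting both sides as enumerating rooted labelled forests on $n$ vertices with weights recording component sizes, which would fit the Cayley-tree theme of the paper; a generating-function proof via Lagrange inversion applied to the tree function is another option.
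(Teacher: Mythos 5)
Your argument is correct and complete. The induction is set up properly: the base case $n=0$ works with the standard reading of the $k=0$ term as $x\cdot x^{-1}\cdot y^n$, the identity $\binom{n}{k}(n-k)=n\binom{n-1}{k}$ does give $\partial_y A_n=nA_{n-1}=n(x+y)^{n-1}$ (and the $k=0$ term is handled consistently on both sides of that computation), and the specialization $y=-x$ collapses each summand to $x(-1)^{n-k}(x-kz)^{n-1}$, so the vanishing of the $n$-th finite difference of a degree-$(n-1)$ polynomial in $k$ pins the constant of integration to $0$. Note, however, that the paper itself offers no proof of this proposition: it is Abel's binomial theorem, quoted with a reference to Riordan's book, and the paper's real work lies in deriving the three variants from it. So your contribution is a self-contained elementary proof of the cited black box rather than an alternative to an argument in the paper; the differentiate-and-evaluate-at-$y=-x$ route you chose is the classical one and is well suited here, since it stays entirely within polynomial identities and avoids the Lagrange-inversion or forest-counting machinery you mention as fallbacks.
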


Three useful variants can be deduced from this identity.
\begin{cor}
For every integer $n\geq 0$,
\begin{align}
\nonumber
    \text{Variant 1~~~~} & \sum_{k=0}^{n} \binom{n}{k} (x+k)^{k-1} (y-k)^{n-k} = \frac{(x+y)^n}{x} ; \\ \nonumber
    \text{Variant 2~~~~} & \sum_{k=0}^{n} \binom{n}{k} (x+k)^{k} (n-k+y)^{n-k-1} = \frac{(x+y+n)^n}{y} ; \\ \nonumber
    \text{Variant 3~~~~} & \sum_{k=0}^{n} \binom{n}{k} (x+k)^{k-1} (n-k+y)^{n-k-1} = \frac{x+y}{xy}(x+y+n)^{n-1}.
\end{align}
\end{cor}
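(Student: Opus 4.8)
The plan is to deduce all three variants from Abel's binomial identity (the Proposition displayed just above) by elementary substitutions together with one differentiation. Throughout I would read these identities as identities of rational functions in $x$ and $y$ — equivalently, after multiplying through by $x$, resp.\ $y$, as genuine polynomial identities — so that the degenerate $k=0$ term involving $(x+k)^{k-1}=x^{-1}$ causes no trouble.

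First I would obtain Variant~1 by setting $z=-1$ in Abel's identity: the summand $x(x-kz)^{k-1}(y+kz)^{n-k}$ becomes $x(x+k)^{k-1}(y-k)^{n-k}$, and dividing both sides by $x$ gives Variant~1 on the nose. Next, for Variant~2 I would reverse the order of summation via $j=n-k$; since $\binom{n}{k}=\binom{n}{n-j}$, the left-hand side of Variant~2 becomes $\sum_{j=0}^{n}\binom{n}{j}(y+j)^{j-1}(x+n-j)^{n-j}$, which is precisely the left-hand side of Variant~1 with $(x,y)$ replaced by $(y,\,x+n)$. Hence Variant~2 follows at once from Variant~1, with right-hand side $(x+y+n)^{n}/y$.

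For Variant~3 — the only step with any content — I would differentiate Variant~2 with respect to $x$ (both sides are polynomials in $x$), obtaining
\[
\sum_{k=0}^{n}\binom{n}{k} k\,(x+k)^{k-1}(n-k+y)^{n-k-1} = \frac{n\,(x+y+n)^{n-1}}{y}.
\]
Writing $k=(x+k)-x$ then splits the left-hand side as
\[
\sum_{k=0}^{n}\binom{n}{k}(x+k)^{k}(n-k+y)^{n-k-1} \;-\; x\sum_{k=0}^{n}\binom{n}{k}(x+k)^{k-1}(n-k+y)^{n-k-1},
\]
whose first sum is the left-hand side of Variant~2, equal to $(x+y+n)^{n}/y$, and whose second sum is the left-hand side $S$ of Variant~3. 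Solving for $S$ gives $xS=\bigl((x+y+n)^{n}-n(x+y+n)^{n-1}\bigr)/y=(x+y)(x+y+n)^{n-1}/y$, which rearranges to Variant~3.

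The main (mild) obstacle is purely bookkeeping: one must make sure the degenerate $k=0$ term $(x+k)^{k-1}=x^{-1}$ is handled consistently, which is exactly why I would work with the cleared, polynomial forms of the identities; the differentiation in $x$ is then legitimate term by term, and the $k=0$ summand there vanishes because of the factor $k$. Once this is acknowledged, each variant is a one-line manipulation of Abel's identity and there is no real difficulty.
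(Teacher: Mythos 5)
Your proof is correct and follows essentially the same route as the paper: Variant~1 from Abel's identity with $z=-1$, Variant~2 from Variant~1 by reversing the summation and substituting, and Variant~3 by differentiating and re-applying Variant~2. The only cosmetic difference is that the paper differentiates (a shifted form of) Variant~1 with respect to $y$ and absorbs the factor $n-k$ into the binomial coefficient, whereas you differentiate Variant~2 with respect to $x$ and split $k=(x+k)-x$; the bookkeeping, including the handling of the degenerate $k=0$ term, comes to the same thing.
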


\begin{proof}
Taking $z=-1$ in Abel's binomial formula gives the first variant. Doing the change of index $k \rightarrow n-k$ and the change of variables $y \rightarrow x+n$ and $x \rightarrow y$ in variant 1 gives variant 2. To get variant 3 we begin by differentiating variant 1 with respect to $y$, so we have:
$$
    \sum_{k=0}^{n} \binom{n}{k} (n-k) (x+k)^{k-1} (n-k+y)^{n-k-1} = n \frac{(x+y+n)^{n-1}}{x}.
$$
Denote by $A$ the left side of variant 3 which we want to compute, then:
$$
    nA - n\sum_{k=1}^{n} \binom{n-1}{k-1} (x+k)^{k-1} (n-k+y)^{n-k-1} = n \frac{(x+y+n)^{n-1}}{x}.
$$
By doing the change of index $k \rightarrow k+1$ in the last sum and using variant 2 for the resulting sum we get:
$$
    nA - n\frac{(x+y+n)^{n-1}}{y} = n\frac{(x+y+n)^{n-1}}{x}.
$$
The expression of $A$ can be deduced from the last display and thus variant 3 is shown.
\end{proof}

\begin{proof}[Proof of Proposition \ref{prop computation of fn} in the general case]

Assume $n \geq 3$. Once again we will use a "tree-cutting" argument but instead of cutting at vertex $n$, we cut at vertex $2$. More precisely, we denote by $\mathcal{Q}_n$ the set of all couples $(A,B)$ with $A$ and $B$ two subsets of $\{1,\dots,n\}$ such that $A \cup B = \{1,\dots,n\}$, $A \cap B = \varnothing$, $1 \in A$ and $2 \in B$. Once again, we decompose the quantity $F_n(x,y,z)$ depending on where $t_2$ is attached to $t_1$: 

$$
    F_n(x,y,z) = \sum_{(A,B) \in \mathcal{Q}_n} \sum_{\substack{t_1 \in \mathfrak{C}_A \\ t_2 \in \mathfrak{C}_B}} \left[ (|A|-1)x^{\deg_1(t_1)}y^{\deg_2(t_2)+1}z^{|L_1(t_1)|} + x^{\deg_1(t_1)+1}y^{\deg_2(t_2)+1}z^{|L_2(t_2)|+1} \right].
$$
The first term appearing after the sums corresponds to attaching $t_2$ to a vertex which is not 1 in $t_1$ and the second term corresponds to attaching $t_2$ to the vertex 1. We then have:

$$
    F_n(x,y,z) = \sum_{a=2}^{n-1} \binom{n-2}{a-1} (a-1)yF_a(x,1,z)F_{n-a}(y) + \sum_{a=1}^{n-1} \binom{n-2}{a-1} xyzF_a(x)F_{n-a}(y,1,z).
$$
Now we can use the cases $y=1$ and $y=z=1$ to replace the occurrences of $F_n$ in the last display. Let's compute the first sum which we call $A_n(x,y,z)$, afterwards we will compute the second one, $B_n(x,y,z)$. 

\begin{align}
    \nonumber
    A_n(x,y,z) & = \sum_{a=2}^{n-1} \binom{n-2}{a-1} (a-1) xy^2z (a-2+x+z)^{a-2}(n-a-1+y)^{n-a-2} \\ \nonumber
    & = (n-2) \sum_{a=0}^{n-3} \binom{n-3}{a} xy^2z (a+x+z)^{a}(n-a-3+y)^{n-a-4} \\ \nonumber
    & = (n-2)xyz(x+y+z+n-3)^{n-3}.
\end{align}
The second equality comes from the fact that $(a-1)\binom{n-2}{a-1} = (n-2)\binom{n-3}{a-2}$. The last equality comes from variant 2 of Abel's binomial formula. Now for $B_n(x,y,z)$ we need to be careful and isolate the case $a=n-1$ because formula (\ref{formula fn}) doesn't apply in the case $(y=1,n=1)$.

\begin{align}
    \nonumber
    B_n(x,y,z) - x^2yz(x+n-2)^{n-3}& = \sum_{a=0}^{n-3} \binom{n-2}{a} x^2y^2z^2 (a+x)^{a-1}(n-a-3+y+z)^{n-a-3} \\ \nonumber
    & = \sum_{a=0}^{n-4} \binom{n-3}{a} x^2y^2z^2 (a+1+x)^{a}(n-a-4+y+z)^{n-a-4} \\ \nonumber
    & ~~~~~ + \sum_{a=0}^{n-3} \binom{n-3}{a} x^2y^2z^2 (a+x)^{a-1}(n-a-3+y+z)^{n-a-3}. \\ \nonumber
    & = \frac{x^2y^2z^2}{y+z-1} \left[ (x+y+z+n-3)^{n-3} - (x+n-2)^{n-3} \right] \\ \nonumber
    & ~~~~~ + xy^2z^2(x+y+z+n-3)^{n-3}.
\end{align}
The second equality comes from the fact that $\binom{n-2}{a} = \binom{n-3}{a-1} + \binom{n-3}{a}$. The last one comes from variants 1 and 2 of Abel's binomial Theorem. The desired formula follows easily.

\end{proof}

\section{A second generating function on Cayley trees and Proof of Theorem \ref{main thm}} \label{study of gn}

The goal now is to compute the exponential generating function of $G_n$:
$$
    \sum_{n \geq 1} \frac{G_{n+1}(x,y,z)}{n!}t^n.
$$
Then by identifying coefficients in formula (\ref{exp gen function of gn}), we will be able to prove Theorem \ref{main thm}. The first step is to establish a symmetry property of $G_n$ with a bijective approach.

\subsection{A symmetry result} \label{a sym result}

Before computing the exponential generating function of $G_n$ we first state a useful symmetry result.

\begin{prop}
For all $n \geq 2$:
$$
G_n(x,y,z) = G_n(y,x,z).
$$
\label{sym of gn}
\end{prop}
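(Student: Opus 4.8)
The plan is to establish a bijection $\Phi : \mathfrak{C}_n \to \mathfrak{C}_n$ on Cayley trees such that for every $t \in \mathfrak{C}_n$ one has
$$
\bigl(\deg_1(t),\ \deg'_2(t),\ |L_1(t)|\bigr) = \bigl(\deg'_2(\Phi(t)),\ \deg_1(\Phi(t)),\ |L_1(\Phi(t))|\bigr).
$$
By the reformulation (\ref{gn with cayley trees}) this immediately gives $G_n(x,y,z) = G_n(y,x,z)$. Since $\deg'_2(t)$ is the degree of the \emph{endpoint} $w$ of the path $L_1(t)$ (the path determined by the greedy label-increasing rule starting at vertex $1$), the natural idea is that $\Phi$ should \enquote{swap the roles} of the vertex $1$ and this distinguished endpoint $w = w(t)$, while leaving the underlying path $L_1(t)$ (hence its length) intact.

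First I would analyze the structure of the greedy path $L_1(t)$: write $1 = u_0, u_1, \dots, u_\ell = w$ for its successive vertices, so $|L_1(t)| = \ell$ and $\deg'_2(t) = \deg_w(t)$. The key observation I expect to need is a characterization of this path that is symmetric under reversal after a suitable relabeling — that is, if one traverses from $w$ back to $1$, the same greedy rule (smallest label exceeding the previous one among neighbors) should reproduce the reversed path, \emph{provided} the vertex labels along the path are reversed appropriately. Concretely, I would define $\Phi(t)$ to be the tree obtained from $t$ by the label transposition exchanging $1 \leftrightarrow w$, i.e. $\Phi(t)$ has the same edge set as $t$ but with vertices $1$ and $w$ swapped (and possibly further relabeling forced by the greedy rule so that the distinguished path of $\Phi(t)$ is exactly the reversal of the one in $t$). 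Then $\deg_1(\Phi(t)) = \deg_w(t) = \deg'_2(t)$ and $\deg'_2(\Phi(t)) = \deg_1(t)$, while the path length is preserved, giving the desired identity; and $\Phi$ is an involution, hence a bijection.

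The main obstacle — and the part that requires genuine care rather than routine checking — is verifying that the greedy path is actually \emph{reversible} in this sense: the rule \enquote{pick the smallest label among neighbors larger than the current one} is not symmetric under path reversal for arbitrary label values, so the naive transposition $1 \leftrightarrow w$ need not send $L_1(t)$ to its reversal, and $\deg'_2(\Phi(t))$ might not equal $\deg_1(t)$. I would handle this by first establishing, via the bijection $\mathcal{E} \circ \alpha^{-1}$ with minimal factorizations (Section \ref{the bij}), a combinatorial meaning for $w$: in the factorization $f$ with $t = \alpha(\mathcal{E}(f))$, the path $L_1(t)$ corresponds to the chain of transpositions successively affecting the trajectory of the point $1$, and its endpoint corresponds to where the point $1$ ends up after all $\mathbb{M}_1$ of those transpositions, namely the point $2$ (this is exactly the content of $\deg'_2(t) = \mathbb{T}_2(f)$). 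Thus at the level of factorizations the statement $G_n(x,y,z)=G_n(y,x,z)$ is the symmetry \enquote{$1$ and $2$ play interchangeable roles in $\mathbb{T}_1, \mathbb{T}_2, \mathbb{M}_1$}, which should follow from conjugating a factorization by a carefully chosen permutation (or reversing/relabeling the factorization) that exchanges the points $1$ and $2$ while preserving $\mathbb{M}_1$; I would then transport this conjugation through $\mathcal{E} \circ \alpha$ to obtain the tree involution $\Phi$ explicitly. Making the correspondence between $\mathbb{M}_1$ on the factorization side and $|L_1|$ on the tree side transparent enough to see that it is preserved is where I expect the bulk of the work to lie.
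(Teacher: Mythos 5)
Your overall strategy --- realize the symmetry as an involution that exchanges the roles of the points $1$ and $2$, built on the factorization side and transported to trees through $\mathcal{E}$ and $\alpha$ --- is exactly the route the paper takes, and your first, purely tree-level attempt (transposing the labels of $1$ and the endpoint $w$ of $L_1(t)$) fails for precisely the reason you suspect: the greedy rule is not invariant under reversal of the path, so the transposition $1\leftrightarrow w$ neither sends $L_1$ to its reversal nor preserves $|L_1|$ in general. The genuine gap is that the workable half of your plan is never pinned down, and the one concrete mechanism you do name --- conjugating the factorization by a permutation $\sigma$ exchanging $1$ and $2$ --- cannot work on its own: $(\sigma\tau_1\sigma^{-1},\dots,\sigma\tau_{n-1}\sigma^{-1})$ is a minimal factorization of $\sigma\,(1\,2\cdots n)\,\sigma^{-1}$, which equals $(1\,2\cdots n)$ only when $\sigma$ is a power of that cycle, and no power of the cycle swaps $1$ and $2$. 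You must combine conjugation with reversal of the order of the factors: reversing $(\tau_1,\dots,\tau_{n-1})$ yields a factorization of the inverse cycle $(1\,n\,\cdots\,2)$, and conjugating by the reflection $\gamma(k)=3-k \bmod n$ (which swaps $1$ and $2$) carries that back to $(1\,2\cdots n)$. This combined map is the paper's $\phi$.

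The second missing piece, which you explicitly defer, is the heart of the matter: why $\mathbb{M}_1$ (equivalently $|L_1|$) is preserved. In the paper this is where the reversal earns its keep: on the tree $\mathcal{F}$, the map $\phi$ relabels vertices by $\gamma$ and edges by $i\mapsto n-i$, and reversing all edge labels makes the recovery algorithm \texttt{Find} traverse the tree in the exact opposite order; from this one reads off that the chain of transpositions affecting the trajectory of $1$ in $f$ corresponds to the analogous chain in $\phi(f)$, so $\mathbb{M}_1$ is unchanged while $\mathbb{T}_1$ and $\mathbb{T}_2$ are exchanged. Without identifying the specific pair (reversal of the factor order, conjugation by $\gamma$) and carrying out this verification, the proposal remains a plan rather than a proof.
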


\begin{proof}
We will prove it by finding a bijection $\phi$ in $\mathfrak{M}_n$ which exchanges $\mathbb{T}_1$ and $\mathbb{T}_2$ and keeps $\mathbb{M}_1$ unchanged. For $1 \leq k \leq n$ we set $\gamma(k)= 3-k \mod{n}$ so $\gamma$ is a permutation and $\gamma^{-1}=\gamma$. For $(\tau_1,\dots,\tau_{n-1}) \in \mathfrak{M}_n$ we define $\phi(\tau_1,\dots,\tau_{n-1}) = \gamma \circ \tau_1 \circ \dots \circ \tau_{n-1} \circ \gamma$. Notice that for any transposition $\tau = (a~b)$, $\gamma \circ \tau \circ \gamma = (\gamma(a)~\gamma(b))$ hence $\phi(\tau_1,\dots,\tau_{n-1})$ is a product of $n-1$ transpositions. To see that $\phi$ has the expected property, we interpret the action of $\phi$ on the tree $\mathcal{F}$. The tree $\mathcal{F} \circ \phi$ is obtained from $\mathcal{F}$ by relabelling the vertex-labels according to the permutation $\gamma$ and the edge-labels according to the permutation $i \mapsto n-i$. Such an edge-relabelling implies that $\verb&Find&_n$ reads through $\mathcal{E} \circ \phi$ in the exact opposite order than in $\mathcal{E}$. Thus we easily check that $\phi(\tau_1,\dots,\tau_{n-1})$ sends 2 on 3, 3 on 4, $\dots$, $n$ on 1 and 1 on 2 so $\phi(\tau_1,\dots,\tau_{n-1})$ is a minimal factorization of $(1 \dots n)$.
\end{proof}

\subsection{Computation of the exponential generating function of the second generating function} \label{comp EGF of gn}

Before computing the exponential generating function of $G_n$ let's introduce the Lambert $W$ function (see e.g \cite{corless}). It is by definition the solution (in the sense of formal series) of $W(z) e^{W(z)} = z$. Using Lagrange inversion, one can show that $W(z) = \sum_{n \geq 1} \frac{(-n)^{n-1}}{n!} z^n$. Again, using Lagrange inversion, one can also compute:
\begin{equation}
    e^{-rW(-z)} = \left[ \frac{-W(-z)}{z} \right]^r = \sum_{n \geq 0} \frac{r(n+r)^{n-1}}{n!} z^n
    \label{W}
\end{equation}
with the convention that $0 \times 0^{-1} = 1$. These properties of $W$ will be useful when proving Proposition \ref{computation of gn} and Theorem \ref{main thm}.

\begin{prop}
The following identity on formal series holds:
\begin{align}
    \nonumber
    e^t (y-x) \sum_{n \geq 1} \frac{G_{n+1}(x,y,z)}{n!}t^n =  & ~ \frac{xyz(x-1)}{x+z-1} e^{-yW(-t)} - \frac{xyz(y-1)}{y+z-1} e^{-xW(-t)} \\
    & + \frac{xyz^2(y-x)}{(x+z-1)(y+z-1)} e^{-(x+y+z-1)W(-t)}.
\label{exp gen function of gn}
\end{align}
\label{computation of gn}
\end{prop}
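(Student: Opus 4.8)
The plan is to imitate the tree-cutting argument used in the proof of Proposition~\ref{prop computation of fn}, but this time to produce a recursion for $G_n$ itself, then to convert that recursion into a first-order linear ODE for the exponential generating function and solve it with the help of the $W$-identities~(\ref{W}).

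\textbf{Step 1: a recursion for $G_n$.} Fix $t\in\mathfrak{C}_n$ rooted at $1$ and cut it at vertex $2$: removing the edge from vertex $2$ to its parent $p$ gives $t_1\in\mathfrak{C}_A$ with $1\in A$, $t_2\in\mathfrak{C}_B$ with $2=\min B$, and the extra datum $p\in A$. Two facts must be checked. If $p=1$, the greedy path of $t$ from $1$ starts $1\to 2$ and then runs inside $t_2$ along its greedy path from $2$, so $|L_1(t)|=1+|L_2(t_2)|$ and the last vertex of $L_1(t)$ is the last vertex of that path in $t_2$. If $p\neq 1$, then $\mathrm{label}(p)\geq 3>2$, so vertex $2$ is never selected by the greedy rule; hence $L_1(t)=L_1(t_1)$ verbatim, and the degree of its last vertex equals the one in $t_1$, raised by $1$ exactly when $p$ is that last vertex. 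Since $G_n$ does not record $\deg_2$, the branch $t_2$ contributes $G_{|B|}(1,y,z)$ when $p=1$ and merely the count $|\mathfrak{C}_B|$ when $p\neq 1$. Summing over the $|A|$ positions of $p$ and over $(A,B)$, and handling the degenerate cases $|A|=1$ and $|B|=1$ by hand, one obtains for $n\geq 2$
\[
G_n(x,y,z)=xz\sum_{a=1}^{n-1}\binom{n-2}{a-1}F_a(x)\,G_{n-a}(1,y,z)+(y-1)zx\,F_{n-1}(x)+\sum_{a=2}^{n-1}\binom{n-2}{a-1}(n-a)^{n-a-2}(a-2+y)\,G_a(x,y,z).
\]
To close this I use Proposition~\ref{sym of gn}: $G_m(1,y,z)=G_m(y,1,z)=\sum_t y^{\deg_1(t)}z^{|L_1(t)|}=F_m(y,1,z)$, and putting $y=1$ in~(\ref{formula fn}) gives $F_m(y,1,z)=yz(y+z+m-2)^{m-2}$ for $m\geq 2$ (while $G_1(1,y,z)=1$).

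\textbf{Step 2: to the generating function.} Working with $\sum_{n\geq 2}G_n(x,y,z)\,t^{n-2}/(n-2)!$ turns every $\binom{n-2}{a-1}$ into an exponential-generating-function product, and by~(\ref{W}) the input series are $\sum_{m\geq 0}F_{m+1}(x)t^m/m!=e^{-xW(-t)}$, $\sum_{m\geq 0}|\mathfrak{C}_{m+1}|\,t^m/m!=e^{-W(-t)}$, and $\sum_{m\geq 0}G_{m+1}(1,y,z)t^m/m!=1+\tfrac{yz}{y+z-1}\bigl(e^{-(y+z-1)W(-t)}-1\bigr)$. This normalisation produces $\mathfrak{g}'(t)$ on the left, where $\mathfrak{g}(t):=\sum_{n\geq 1}G_{n+1}(x,y,z)t^n/n!$; the recursion thus becomes an algebraic identity between $\mathfrak{g}$, $\mathfrak{g}'$ and these series. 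Collecting the $\mathfrak{g}'$-terms and substituting $u=-W(-t)$ (so $t=ue^{-u}$, $du/dt=e^{u}/(1-u)$, $te^{-W(-t)}=u$) linearises everything into
\[
\frac{d\mathfrak{g}}{du}-(y-1)\,\mathfrak{g}=\frac{xyz(y-1)}{y+z-1}\,e^{(x-1)u}+\frac{xyz^2}{y+z-1}\,e^{(x+y+z-2)u},\qquad \mathfrak{g}\big|_{u=0}=0.
\]

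\textbf{Step 3: solve and simplify.} Integrating with the factor $e^{-(y-1)u}$ expresses $\mathfrak{g}$ as an explicit combination of $e^{(x-1)u}$, $e^{(y-1)u}$, $e^{(x+y+z-2)u}$. Using $e^{(c-1)u}=\bigl(t/(-W(-t))\bigr)e^{-cW(-t)}$, collecting the $e^{-yW(-t)}$-coefficient via the elementary identity $(y-1)(x+z-1)+z(x-y)=(x-1)(y+z-1)$, and restoring the prefactor, one recovers exactly~(\ref{exp gen function of gn}).

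\textbf{Main obstacle.} The delicate part is Step 1: one must follow the statistic ``degree of the last vertex of $L_1$'' through pruning and regrafting the vertex-$2$ branch, in particular establishing that this branch disturbs the greedy path only when it is attached at the root, and one must separately correct the small cases where~(\ref{formula fn}) or the $G_1$-term fails (these produce the $(y-1)zx\,F_{n-1}(x)$ term). Once the recursion is in hand, Steps 2–3 are a mechanical, if somewhat lengthy, computation driven entirely by Abel's binomial theorem as packaged in~(\ref{W}).
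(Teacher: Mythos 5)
Your Steps 1 and 2 are essentially sound: I checked your recursion for $G_n$ directly against $\mathfrak{C}_3$ and $\mathfrak{C}_4$ and it holds, and the passage to the linear ODE in $u=-W(-t)$ is a correct mechanical translation (this is a genuinely different route from the paper, which cuts at the vertex $n+1$, uses Proposition \ref{sym of gn} to cancel the recursive term by subtracting the $x\leftrightarrow y$ version of its recursion, and finishes with binomial inversion). The genuine gap is in Step 3. Solving your ODE with $\mathfrak{g}|_{u=0}=0$ gives
\[
(y-x)\,\mathfrak{g}=e^{-u}\left[\frac{xyz(x-1)}{x+z-1}e^{yu}-\frac{xyz(y-1)}{y+z-1}e^{xu}+\frac{xyz^2(y-x)}{(x+z-1)(y+z-1)}e^{(x+y+z-1)u}\right],
\]
and the prefactor $e^{-u}=t/(-W(-t))=e^{W(-t)}$ is \emph{not} $e^{-t}$: the two series already differ at order $t^2$. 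So what your argument proves is the identity with $e^{-W(-t)}$ in place of $e^{t}$ on the left of (\ref{exp gen function of gn}); the final ``restoring the prefactor'' step cannot recover the stated formula, and no rearrangement of the $x,y,z$ coefficients will repair a mismatch in the $t$-dependence.

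What makes this more than a slip on your part is that your version appears to be the correct one. Enumerating the sixteen trees of $\mathfrak{C}_4$ gives $G_4(x,y,1)=x^3y+xy^3+5xy^2+5x^2y+x^2y^2+3xy$, and at $x=2$, $y=3$, $z=1$ the coefficient of $t^3$ on the right of (\ref{exp gen function of gn}) is $74$, while $e^t(y-x)\sum_{n\ge1}G_{n+1}t^n/n!$ yields $68$ and your $e^{-W(-t)}(y-x)\sum_{n\ge1}G_{n+1}t^n/n!$ yields $74$. The source of the discrepancy is in the paper's own proof: its tree-cutting recursion for $G_{n+1}$ (and likewise the one for $F_n$) omits the factor $|\mathfrak{C}_B|=(n+1-a)^{n-a-1}$ counting the detached subtrees $t_2$ in the terms where no statistic of $t_2$ is recorded; this factor equals $1$ whenever $|B|\le 2$, so the omission is invisible for $n\le 3$ but falsifies the displayed recursion at $n=4$ (at $x=y=z=1$ it returns $119$ instead of $5^3=125$). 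With the factor restored, the Pascal inversion against $e^t$ must be replaced by an inversion against $e^{-W(-t)}$, which is exactly what your ODE delivers. You should therefore state your conclusion as $e^{-W(-t)}(y-x)\sum_{n\ge1}G_{n+1}(x,y,z)t^n/n!$ equals the right-hand side of (\ref{exp gen function of gn}), rather than claiming to have recovered the Proposition as written; as written, Step 3 is wrong.
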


\begin{proof}
Fix $n \geq 1$. As in the proof of Proposition \ref{prop computation of fn} in the case $y=1$, we denote by $\mathcal{P}_{n+1}$ the set of couples $(A,B)$ with $A$ and $B$ two subsets of $\{1,\dots,n+1\}$ such that $A \cup B = \{1,\dots,n+1\}$, $A \cap B = \varnothing$, $1 \in A$ and $n+1 \in B$. Then with a similar "tree-cutting" argument that led to (\ref{tree cutting}) we have:

\begin{align}
    \nonumber
    G_{n+1}(x,y,z) = xyz\sum_{t \in \mathfrak{C}_n}y^{\deg_1(t)} + \sum_{\substack{(A,B) \in \mathcal{P}_{n+1} \\ |A|>1}} \sum_{\substack{t_1 \in \mathfrak{C}_A \\ t_2 \in \mathfrak{C}_B}} & \left[ (|A|-2)x^{\deg_1(t_1)}y^{\deg'_2(t_1)}z^{|L_1(t_1)|} \right. \\
    \nonumber
    & + x^{\deg_1(t_1)+1}y^{\deg'_2(t_1)}z^{|L_1(t_1)|} \\ \nonumber
    & \left. + x^{\deg_1(t_1)}y^{\deg_{n+1}(t_2)+1}z^{|L_1(t_1)|+1} \right].
\end{align}
The first term corresponds to the case where $t_1$ has only 1 vertex (i.e. $|A|=1$). If $|A|>1$ then the vertex 1 and the last vertex of the path $L_1(t_1)$ are distinct in $t_1$ and we have to consider three cases: 1) we attach $t_2$ to a vertex of $t_1$ which is neither 1, nor the last vertex of $L_1(t_1)$. 2) We attach $t_2$ to the vertex 1. 3) We attach $t_2$ to the last vertex of $L_1(t_1)$. We then have:
$$
    G_{n+1}(x,y,z) = xyzF_{n}(y) + \sum_{a=2}^{n} \binom{n-1}{a-1} ( (a-2)G_a(x,y,z) + xG_a(x,y,z) + yzF_a(x,1,z)F_{n+1-a}(y) ).
$$
By Proposition \ref{sym of gn}, we get:
$$
    G_{n+1}(x,y,z) = xyzF_{n}(x) + \sum_{a=2}^{n} \binom{n-1}{a-1} \left[ (a-2)G_a(x,y,z) + yG_a(x,y,z) + xzF_a(y,1,z)F_{n+1-a}(x) \right].
$$
Now if we make the difference between the last two equations, we obtain:

\begin{align}
    \nonumber
    0 = ~ & xyz(F_n(y)-F_n(x)) + (x-y)\sum_{a=2}^{n} \binom{n-1}{a-1} G_a(x,y,z) \\ \nonumber
    & + z\sum_{a=2}^{n} \binom{n-1}{a-1} \left[  yF_a(x,1,z)F_{n+1-a}(y)-xF_a(y,1,z)F_{n+1-a}(x) \right]
\end{align}
If $n \geq 2$, by using the third variant of Abel's binomial formula and Proposition \ref{prop computation of fn} we get:
\begin{align}
    \nonumber
    \sum_{a=2}^{n} \binom{n-1}{a-1} F_a(x,1,z)F_{n+1-a}(y) = ~ & xz\frac{x+y+z-1}{x+z-1}(x+y+z+n-2)^{n-2} \\ \nonumber
    & -\frac{xyz(x+n-1)^{n-2}}{y+z-1}
\end{align}
and
\begin{align}
    \nonumber
    \sum_{a=2}^{n} \binom{n-1}{a-1} F_a(y,1,z)F_{n+1-a}(x) = ~ & yz\frac{x+y+z-1}{y+z-1}(x+y+z+n-2)^{n-2} \\ \nonumber
    & -\frac{xyz(y+n-1)^{n-2}}{x+z-1}.
\end{align}
So, finally,
$$
    0 = (x-y)\sum_{a=2}^{n} \binom{n-1}{a-1} G_a(x,y,z) + u_{n-1}(x,y,z),
$$
where
\begin{align}
    \nonumber
    u_{n-1}(x,y,z) =  & ~ \frac{xy^2z(x-1)}{x+z-1} (y+n-1)^{n-2} - \frac{x^2yz(y-1)}{y+z-1} (x+n-1)^{n-2} \\ \nonumber 
    & + \frac{xyz^2(y-x)(x+y+z-1)}{(x+z-1)(y+z-1)} (x+y+z+n-2)^{n-2}. 
\end{align}
Thus, by Pascal's inversion formula,
\begin{equation}
    \label{GnPn}
    (y-x)G_{n+1}(x,y,z) = \sum_{k=0}^n \binom{n}{k} (-1)^{n-k} u_k(x,y,z).
\end{equation}
Now define for $n \geq 0$ the polynomials $P_n$ by
\begin{equation}
    \label{Pn}
    P_n(u) = \sum_{k=0}^n u\binom{n}{k} (-1)^{n-k} (k+u)^{k-1}.
\end{equation}
Their exponential generating function is given by
\begin{align}
    \nonumber
    \sum_{n \geq 0} \frac{P_n(u)}{n!} t^n & = \sum_{k \geq 0} u t^k (k+u)^{k-1} \sum_{n \geq k} \binom{n}{k} \frac{(-1)^{n-k}}{n!} t^{n-k} \\
    \nonumber
    & = u e^{-t} \sum_{k \geq 0} \frac{(k+u)^{k-1}}{k!} t^k \\ \nonumber
    & = e^{-t -uW(-t)}.
\end{align}
This, combined with (\ref{GnPn}), readily gives the desired result.
    
\end{proof}

\subsection{Proof of Theorem \ref{main thm}} \label{proof main thm}

To simplify notation, for $n \geq 2$, set
$$
    p_{i,j}^n = \mathbb{P} \left( \deg_1(T_n)=i, \deg_2'(T_n)=j \right)
$$
so that
$$
    p_{i,j}^n=\frac{1}{n^{n-2}}[x^i y^j]G_n(x,y,1).
$$

\begin{proof}[Proof of Theorem \ref{main thm}]
Fix $i,j \geq 1$. We take the formula of Proposition \ref{computation of gn} with $z=1$ and divide it by $(y-x)$ to get:
\begin{align}
    \nonumber
    e^t \sum_{n \geq 1} \frac{G_{n+1}(x,y,1)}{n!}t^n =  & ~ \frac{xy}{y-x} \left[ e^{-yW(-t)} - e^{-xW(-t)} \right] + \frac{1}{y-x} \left[ xe^{-xW(-t)} - ye^{-yW(-t)} \right] \\ \nonumber
    & + e^{-(x+y)W(-t)}.
\end{align}
We shall identify the coefficient in front of $x^iy^j$ (which is a polynomial in $t$) in this formula. On the left of this equality, the coefficient is:
$$
    e^t \sum_{n \geq 1} p_{i,j}^{n+1}\frac{(n+1)^{n-1}}{n!}t^n.
$$
Using the identity $y^k - x^k = (y-x)(y^{k-1} + y^{k-2}x + \dots + x^{k-1})$ we deduce the coefficient on the right:
$$
    \frac{(-W(-t))^{i+j}}{i!j!} + \frac{(-W(-t))^{i+j-1}}{(i+j-1)!} + \frac{(-W(-t))^{i+j}}{(i+j)!}.
$$
Therefore
$$
    \sum_{n \geq 1} p_{i,j}^{n+1}\frac{(n+1)^{n-1}}{n!}t^n = e^{-t}\frac{(-W(-t))^{i+j}}{i!j!} + e^{-t}\frac{(-W(-t))^{i+j-1}}{(i+j-1)!} + e^{-t}\frac{(-W(-t))^{i+j}}{(i+j)!}.
$$
We now fix $n \geq i+j$ and identify the coefficient associated with $t^n$ in the above formula. Fix $1 \leq \ell \leq n$. By using formula (\ref{W}) we obtain:
$$
    [t^n]e^{-t}(-W(-t))^\ell=\ell\sum_{k=\ell}^n \frac{(-1)^{n-k}}{(n-k)!} \frac{k^{k-1-\ell}}{(k-\ell)!}.
$$
Recall the definition of $P_n$ in (\ref{Pn}). Taking $y=z=1$ in equation (\ref{GnPn}) gives:
$$
    G_{n+1}(x,1,1) = F_{n+1}(x) = P_n(x+1) - P_n(1).
$$
In particular,
$$
    P_n^{(\ell)}(x+1) = F_n^{(\ell)}(x) = x\frac{(n-1)!}{(n-1-\ell)!}(n+x)^{n-1-\ell} + \ell\frac{(n-1)!}{(n-\ell)!}(n+x)^{n-\ell},
$$
with the convention that $1/(-1)!=0$. On the other hand, by definition of $P_n$,
$$
    P_n^{(\ell)}(x) = x\sum_{k=\ell+1}^n (-1)^{n-k} \binom{n}{k} \frac{(k-1)!}{(k-1-\ell)!}(k+x)^{k-1-\ell} + \ell\sum_{k=\ell}^n (-1)^{n-k} \binom{n}{k} \frac{(k-1)!}{(k-\ell)!}(k+x)^{k-\ell}.
$$
We finally obtain:
$$
    \ell\sum_{k=\ell}^n \frac{(-1)^{n-k}}{(n-k)!} \frac{k^{k-1-\ell}}{(k-\ell)!} = \frac{1}{n!}P_n^{(\ell)}(0) = (n-1)^{n-\ell-1}\frac{\ell-1}{(n-\ell)!}.
$$
Formula (\ref{main formula}) then follows.

\end{proof}

\bibliographystyle{alpha}
\bibliography{biblio}

\end{document}